\newtheorem{thm}{Theorem}
\newtheorem{lemma}[thm]{Lemma}
\theoremstyle{definition}
\newtheorem{definition}{Definition}
\title{Improved lower bound on the dimension of the EU council's voting rules}
\author[1]{Stefan Kober}
\author[2]{Stefan Weltge}
\affil[1]{\textit{\small{Advanced Optimization in a Networked Economy (AdONE), Technical University of Munich, Arcisstr. 21, 80333 Munich
}}}
\affil[2]{\textit{\small{Technical University of Munich, Arcisstr. 21, 80333 Munich }}}
\date{}
\newcommand{\winning}{\mathcal{W}}
\newcommand{\winningeu}{\mathcal{W}_{\text{EU}}}
\newcommand{\losing}{\mathcal{L}}
\newcommand{\nonsep}{\mathcal{N}}
\newcommand{\calF}{\mathcal{F}}
\newcommand{\R}{\mathbb{R}}
\begin{document}

\maketitle
\begin{abstract}
	Kurz and Napel (2015) proved that the voting system of the EU council (based on the 2014 population data) cannot be represented as the intersection of six weighted games, i.e., its dimension is at least $ 7 $.
	This set a new record for real-world voting rules and the authors posed the exact determination as a challenge.
	Recently, Chen, Cheung, and Ng (2019) showed that the dimension is at most $ 24 $.

	We provide the first improved lower bound and show that the dimension is at least $ 8 $.\\
	
	\textbf{Keywords:} simple games $\cdot$ weighted games $\cdot$ dimension $\cdot$ real-world voting systems.
\end{abstract}

\section{Introduction}\label{sec:intro}

Simple games are cooperative games that are commonly used to describe real-world voting systems.
Considering a fixed, finite set $ M $ of voting members, a \emph{simple game} is given by a collection $ \winning $ of subsets of $ M $ satisfying the monotonicity property: $ C \in \winning $ and $ C \subseteq C' \subseteq M $ implies $ C' \in \winning $.
The sets in $ \winning $ are called \emph{winning coalitions}, and each subset of $ M $ that is not in $ \winning $ is called a \emph{losing coalition}.
A fundamental class of simple games are \emph{weighted games} whose winning coalitions can be written as
\[
	\winning = \left \{ C \subseteq M : \sum \nolimits_{m \in C} a_m \ge \beta \right \}
\]
for some $ a \in \R_{\ge 0}^M $ and $ \beta \in \R $.
It is a basic fact that every simple game is the intersection of finitely many weighted games, and hence we may define the \emph{dimension} of a simple game $ \winning $ to be the smallest number of weighted games whose intersection is $ \winning $.

Determining the dimension of (simple games associated to) real-world voting systems has been of particular interest in social choice theory, see, e.g., \citet*{TaylorPacelli}.
While many voting rules are actually weighted and hence have dimension one, examples of dimension two are given by the US federal legislative system~\cite{TaylorZwicker} and the amendment of the Canadian constitution~\cite{Kilgour}. A voting rule of dimension three has been adopted by the Legislative Council of Hong Kong~\cite{CheungNg}.

A new record was set with the change of the EU (European Union) council's voting system by the Treaty of Lisbon in 2014.
Based on the population data of 2014, \citet*{Kurz_2015} showed that its dimension is at least $ 7 $ and at most $13368$, and they posed the exact determination as a challenge to the community.
In response, \citet*{Chen_2019} were able to reduce the upper bound to $24$.

We provide the first improved lower bound and show that the dimension is at least $ 8 $.
Although we will not rely on this interpretation in what follows, the idea behind our lower bound is based on the observation that the dimension of a simple game $ \winning $ can be seen as the chromatic number of a particular hypergraph $ H $: the nodes of $ H $ are the losing coalitions, and a set of losing coalitions $ \nonsep $ forms a hyperedge iff $ \nonsep \cap \winning' \ne \emptyset $ for every weighted game $ \winning' \supseteq \winning $.
The proof of \citet{Kurz_2015} establishes that $ H $ contains a \emph{clique} of cardinality $ 7 $, which directly implies that the chromatic number of $ H $ is at least $ 7 $.
This idea has been used previously in the context of lower bounds on sizes of integer programming formulations~\cite{Jeroslow,KaibelWeltge,FaenzaSanita}.
While we have not found any \emph{simple} subgraph of larger chromatic number, we will show that $ H $ contains a \emph{hyper}graph on $ 15 $ nodes whose chromatic number is $ 8 $.

\paragraph{Outline.}
In Section~\ref{secStrategy} we introduce the concept of non-separable subsets of the losing coalitions of a simple game $ \winning $.
A family $ \calF $ of such subsets can be thought of as a subgraph of the above hypergraph.
Moreover, we consider the notion of a $ k $-cover for such a set $ \calF $, which can be seen as a node-coloring of the respective subgraph with $ k $ colors.
Accordingly, we will see that if the dimension of $ \winning $ is at most $ k $, then there exists a $ k $-cover for each $ \calF $.
In Section~\ref{secConstruction} we consider the simple game associated to the EU council and give a construction of a set $ \calF $, for which no $ 7 $-cover exists.
A proof of the latter fact will be given in Section~\ref{secLowerBound}.

\section{Strategy}
\label{secStrategy}
In what follows, we consider simple games on a common fixed ground set $ M $.

\begin{definition}\label{def:non-separable}
	Let $ \winning $ be a simple game and $ \nonsep $ be any set of losing coalitions of $ \winning $.
	We say that $ \nonsep $ is \emph{non-separable} with respect to $ \winning $ if every weighted game $ \winning' \supseteq \winning $ satisfies $ \winning' \cap \nonsep \ne \emptyset $.
\end{definition}

From the definition it is immediate that a simple game is weighted if and only if no set of losing coalitions is non-separable.
So, the existence of a single non-separable set yields that the dimension of a simple game is at least two.
To obtain a larger lower bound, the following notion will be useful.

\begin{definition}
	Let $ \winning $ be a simple game with losing coalitions $ \losing $, and let $ \nonsep_1, \dots, \nonsep_t \subseteq \losing $ be non-separable with respect to $ \winning $.
	A \emph{$ k $-cover} of $ (\nonsep_1, \dots, \nonsep_t) $ is a collection of sets $ \losing_1,\dots,\losing_k \subseteq \losing $ such that
	\begin{enumerate}
		\item\label{enum:union} $ \losing_1 \cup \dots \cup \losing_k = \nonsep_1 \cup \dots \cup \nonsep_t $ and
		\item\label{enum:contain} $ \nonsep_i \nsubseteq \losing_j $ for all $ i \in \{1,\dots,t\} $, $ j \in \{1,\dots,k\} $.
	\end{enumerate}
	
\end{definition}

In order to obtain a lower bound on the dimension, we will exploit the following observation.

\begin{lemma}\label{lemma:dim}
	Let $ \winning $ be a simple game with non-separable sets $ \nonsep_1, \dots, \nonsep_t $.
	If $ \winning $ has dimension at most $ k $, then there exists a $ k $-cover for $ (\nonsep_1, \dots, \nonsep_t) $.
\end{lemma}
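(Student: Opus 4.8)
Suppose $\winning$ has dimension at most $k$, so $\winning = \winning'_1 \cap \dots \cap \winning'_k$ for weighted games $\winning'_1,\dots,\winning'_k$, each of which contains $\winning$. The natural candidate for a $k$-cover is to set $\losing_j := \losing \cap \winning'_j$, i.e., the losing coalitions of $\winning$ that happen to be winning in the $j$-th weighted game. I need to verify that $(\losing_1,\dots,\losing_k)$ is indeed a $k$-cover of $(\nonsep_1,\dots,\nonsep_t)$, which means checking the two defining properties; but first I should restrict each $\losing_j$ to $\nonsep_1 \cup \dots \cup \nonsep_t$ (intersect with that union) so that the union condition can hold exactly.

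**Property~\ref{enum:contain} (no $\nonsep_i$ fully contained in a $\losing_j$).** This is where the non-separability hypothesis does its work, and I expect it to be the only substantive point. Each $\winning'_j \supseteq \winning$ is a weighted game, so by Definition~\ref{def:non-separable} we have $\winning'_j \cap \nonsep_i \ne \emptyset$ for every $i$: some coalition $C \in \nonsep_i$ is winning in $\winning'_j$. But such a $C$ is losing in $\winning$ (since $\nonsep_i \subseteq \losing$), hence $C$ must be losing in some other weighted game $\winning'_{j'}$ — in particular $C \notin \winning'_j$ is false, so I have to be careful about the direction. Let me restate: $C \in \winning'_j$ and $C \notin \winning$, so there exists $j'' \ne j$ with $C \notin \winning'_{j''}$; this shows $C \notin \losing_{j''}$, not what I want. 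The right argument runs the other way: I want to exhibit, for fixed $i,j$, some coalition of $\nonsep_i$ that is \emph{not} in $\losing_j$. A coalition $C \in \nonsep_i$ fails to lie in $\losing_j$ exactly when $C \notin \winning'_j$. If every $C \in \nonsep_i$ were in $\winning'_j$, then combined with $C \in \winning'_{j'}$ for the other indices... that's not automatic. So instead: since $C \in \nonsep_i$ is losing in $\winning = \bigcap \winning'_\ell$, there is at least one index $\ell(C)$ with $C \notin \winning'_{\ell(C)}$, i.e., $C \notin \losing_{\ell(C)}$. I must produce, for each $j$, \emph{some} $C \in \nonsep_i$ with $\ell(C) = j$ — and this is precisely what non-separability of $\nonsep_i$ with respect to $\winning$ gives when applied to the weighted game $\winning'_j$ is not quite it either. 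The clean formulation: apply Definition~\ref{def:non-separable} to the weighted game $\winning'_j$ to get $C \in \nonsep_i \cap \winning'_j$; then since $C$ is losing in $\winning$, we get $C \notin \winning'_{j'}$ for some $j' \ne j$, hence $C \in \nonsep_i \setminus \losing_{j'}$ — so $\nonsep_i \not\subseteq \losing_{j'}$. This establishes Property~\ref{enum:contain} for all pairs $(i,j')$ with $j'$ ranging over a set that, as $j$ varies, must cover all of $\{1,\dots,k\}$; I need to check that argument closes, i.e., that for every target index one can arrange a witness, which may require iterating over all $j$ and using that the complement index $j'$ can be forced to be any prescribed value — this bookkeeping is the main obstacle and likely the one place the proof needs a careful line or two.

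**Property~\ref{enum:union}.** With $\losing_j := (\nonsep_1 \cup \dots \cup \nonsep_t) \cap \winning'_j$, the union $\losing_1 \cup \dots \cup \losing_k$ is contained in $\nonsep_1 \cup \dots \cup \nonsep_t$ by construction. For the reverse inclusion, take any $C$ in the union of the $\nonsep_i$; then $C$ is losing in $\winning$, so $C \notin \winning'_j$ for at least one $j$ — but I need $C \in \winning'_j$ for at least one $j$ to place it in some $\losing_j$. Hmm: that fails if $C$ lies in \emph{none} of the $\winning'_j$. The fix is to relax the construction, e.g. take $\losing_j$ to include all coalitions winning in $\winning'_j$ together with, say, assigning each leftover $C$ arbitrarily to one block while keeping it from swallowing any $\nonsep_i$ — but that reintroduces the worry about Property~\ref{enum:contain}. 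The likely intended route, which I would pursue, is: for each $C$ in $\bigcup_i \nonsep_i$, pick one index $j(C)$ with $C \notin \winning'_{j(C)}$ and define $\losing_j := \{\, C \in \bigcup_i \nonsep_i : j(C) \ne j \,\}$; then $\bigcup_j \losing_j = \bigcup_i \nonsep_i$ since every $C$ is omitted from exactly one block, giving Property~\ref{enum:union} immediately, and for Property~\ref{enum:contain} I again invoke non-separability of $\nonsep_i$ relative to $\winning'_j$ to find $C \in \nonsep_i \cap \winning'_j$, whence $j(C) \ne j$, so $C \in \losing_j$... which is the wrong direction once more. Resolving this sign is the crux; the correct pairing is that $\nonsep_i \not\subseteq \losing_j$ needs a $C \in \nonsep_i$ with $j(C) = j$, and non-separability applied to $\winning'_j$ produces $C \in \nonsep_i$ with $C \in \winning'_j$, forcing $j(C) \ne j$ for \emph{that} $C$ — so to get a $C$ with $j(C)=j$ I should instead argue by contradiction: if $\nonsep_i \subseteq \losing_j$ then no $C \in \nonsep_i$ has $j(C)=j$, meaning every $C \in \nonsep_i$ has $C \notin \winning'_{j(C)}$ with $j(C)\ne j$, which does not yet contradict anything — unless I also know $\nonsep_i \cap \winning'_j \ne\emptyset$, which I do, giving $C_0 \in \nonsep_i$ with $C_0 \in \winning'_j$; but then $j(C_0)\ne j$ is consistent with $C_0\in\losing_j$, no contradiction. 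I suspect the actual definition of $\losing_j$ in the paper is $\losing_j := \{\, C \in \bigcup_i\nonsep_i : C \notin \winning'_j \,\}$ (the coalitions \emph{losing} in the $j$-th game), for which $\bigcup_j\losing_j = \bigcup_i\nonsep_i$ holds because each such $C$ is losing in $\winning$ hence losing in some $\winning'_j$, and $\nonsep_i\not\subseteq\losing_j$ holds because non-separability gives $C\in\nonsep_i\cap\winning'_j$, i.e. $C\notin\losing_j$. That is consistent and clean — so the plan is: define $\losing_j$ as the coalitions in $\bigcup_i\nonsep_i$ that are \emph{not} winning in $\winning'_j$, verify the union property from $\winning=\bigcap\winning'_j$, and verify the containment property from non-separability of each $\nonsep_i$ applied to each weighted game $\winning'_j$. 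The one genuine obstacle is getting the direction of the set-complement right; once that is pinned down, both properties are one-line verifications.
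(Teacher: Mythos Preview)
Your final paragraph lands on exactly the paper's construction and argument: set $\losing_j := \{\, C \in \bigcup_i \nonsep_i : C \notin \winning'_j \,\}$, check Property~\ref{enum:union} using $\winning = \bigcap_j \winning'_j$ (every $C$ in the union is losing in $\winning$, hence losing in some $\winning'_j$), and check Property~\ref{enum:contain} by applying non-separability of $\nonsep_i$ to the weighted game $\winning'_j \supseteq \winning$ to produce $C \in \nonsep_i \cap \winning'_j$, i.e.\ $C \notin \losing_j$. Your initial candidate $\losing_j := \losing \cap \winning'_j$ had the complement backwards, which is what generated all the back-and-forth; once flipped, both verifications are indeed one-liners, as you note.
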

\begin{proof}
	If $ \winning $ has dimension at most $ k $, then there exist $k$ weighted games $ \winning_1, \dots, \winning_k $ such that $ \bigcap_{i=1}^k \winning_i = \winning $.
	For $ i \in \{1,\dots,k\} $ define $ \losing_i $ as the intersection of the losing coalitions in $ \winning_i $ and $ \losing^* \coloneqq \nonsep_1 \cup \dots \cup \nonsep_t $.

	We claim that $(\losing_1, \dots, \losing_k)$ is a $ k $-cover of $ (\nonsep_1, \dots, \nonsep_t) $.
	In order to show Property~\ref{enum:union}, first observe that $ \losing_1 \cup \dots \cup \losing_k \subseteq \losing^* $ holds.
	Now, for any $ \ell \in \losing^* \subseteq \losing $ we have $ \ell \notin \winning $ and hence there is an $ i \in \{1,\dots,k\} $ with $ \ell \notin \winning_i $, which implies $ \ell \in \losing_i $.

	For Property~\ref{enum:contain}, assume that $ \nonsep_i \subseteq \losing_j $ holds for some $ i \in \{1,\dots,t\} $ and $ j \in \{1,\dots,k\} $.
	This means that each coalition in $ \nonsep_i $ is losing for $ \winning_j $, meaning that $ \nonsep_i $ and $ \winning_j $ are disjoint.
	This contradicts the fact that $ \nonsep_i $ is non-separable with respect to $ \winning $ since $ \winning_j \supseteq \winning $ is weighted.
\end{proof}

In what follows, we will consider the simple game associated with the EU council and construct a collection of non-separable losing coalitions that does not permit a $ 7 $-covering.
By \cref{lemma:dim} this implies that the dimension must be at least $ 8 $.

\section{Our Construction}
\label{secConstruction}
Let us give a formal definition of the simple game associated to the EU council based on the population data of 2014, as considered by~\citet{Kurz_2015}.
In 2014, the European Union consisted of $ 28 $ members and hence we may fix $ M \coloneqq \{1,\dots,28\} $.
In the voting system of the EU council, a coalition is winning if
\begin{enumerate}
	\item \label{enum:55} it contains at least $ 55\% $ of all members states \emph{and}
	\item \label{enum:65} it unites at least $ 65\% $ of the total EU population,
\end{enumerate}
or
\begin{enumerate}
	\setcounter{enumi}{2}
	\item \label{enum:25} it consists of at least $25$ of the $28$ member states.
\end{enumerate}
Denoting the weighted game associated with rule $i$ by $\winning_i$ and the simple game that represents the voting system of the EU council by $\winningeu$, we thus have
\[
	\winningeu = (\winning_{\ref{enum:55}} \cap \winning_{\ref{enum:65}}) \cup \winning_{\ref{enum:25}}.
\]
Note that $ \winning_{\ref{enum:65}} $ depends on the population of each member state.
As in~\cite{Kurz_2015}, we will work with the data depicted in Table~\ref{tabpopulation}.
From these numbers, it can be seen that the following coalitions are losing with respect to $ \winningeu $.
\begingroup
\allowdisplaybreaks
\begin{align*}
	L_{1}  & \coloneqq \{ 2, 3, 5, 6, 8, 9, 10, 11, 12, 15, 16, 17, 18, 19, 20, 21, 22, 23, 24, 25, 26, 27, 28 \} \\
	L_{2}  & \coloneqq \{ 1, 4, 5, 7, 8, 9, 10, 11, 12, 14, 15, 16, 17, 18, 19, 20, 21, 22, 23, 24, 25, 26, 27, 28 \} \\
	L_{3}  & \coloneqq \{ 2, 3, 6, 7, 8, 9, 10, 11, 12, 13, 14, 15, 16, 17, 18, 19, 20, 21, 22, 23, 24, 25, 26, 27 \} \\
	L_{4}  & \coloneqq \{ 3, 4, 5, 6, 7, 8, 9, 12, 13, 15, 16, 17, 18, 19, 20, 21, 22, 23, 24, 25, 26, 27, 28 \} \\
	L_{5}  & \coloneqq \{ 2, 4, 5, 6, 7, 9, 10, 12, 13, 14, 16, 17, 18, 19, 20, 21, 22, 23, 24, 25, 26, 27, 28 \} \\
	L_{6}  & \coloneqq \{ 2, 3, 5, 6, 7, 10, 11, 13, 14, 15, 16, 17, 18, 19, 20, 21, 22, 23, 24, 25, 26, 27, 28 \} \\
	L_{7}  & \coloneqq \{ 3, 4, 5, 6, 8, 9, 10, 11, 12, 13, 14, 15, 16, 19, 20, 21, 22, 23, 24, 25, 26, 27, 28 \} \\
	L_{8}  & \coloneqq \{ 2, 3, 4, 7, 8, 9, 10, 11, 12, 13, 14, 15, 17, 19, 20, 21, 22, 23, 24, 25, 26, 27, 28 \} \\
	L_{9}  & \coloneqq \{ 1, 3, 5, 7, 8, 9, 10, 11, 12, 13, 14, 16, 17, 18, 19, 20, 22, 23, 24, 25, 26, 27, 28 \} \\
	L_{10} & \coloneqq \{ 2, 4, 5, 6, 7, 8, 11, 13, 14, 15, 16, 17, 18, 19, 20, 21, 22, 23, 24, 25, 26, 27, 28 \} \\
	L_{11} & \coloneqq \{ 1, 2, 6, 7, 8, 9, 10, 11, 12, 13, 14, 15, 16, 17, 18, 19, 21, 23, 24, 25, 26, 27, 28 \} \\
	L_{12} & \coloneqq \{ 1, 4, 5, 6, 10, 11, 12, 13, 14, 15, 16, 17, 18, 19, 20, 21, 22, 23, 24, 25, 26, 27, 28 \} \\
	L_{13} & \coloneqq \{ 2, 4, 5, 7, 8, 9, 10, 11, 12, 13, 14, 15, 16, 17, 18, 19, 20, 21, 22, 23, 24, 25, 27, 28 \} \\
	L_{14} & \coloneqq \{ 1, 4, 6, 7, 8, 9, 10, 11, 12, 13, 14, 15, 16, 17, 18, 20, 21, 22, 23, 24, 25, 26, 27, 28 \} \\
	L_{15} & \coloneqq \{ 1, 2, 3, 4, 5, 6, 7, 8, 9, 10, 11, 12, 13, 14, 15 \} 
\end{align*}
\endgroup
Next, we construct non-separable subsets with respect to $ \winningeu $ that consist of the above losing coalitions.
In order to verify that these subsets are indeed non-separable, the following lemma is helpful.

\begin{table}
	\begin{center}
		\footnotesize
		\begin{tabular}{rlrrlr}
			 \hline
			 \# & Member state & Population & \# & Member state & Population \\
			 \hline
			 1 & Germany        & 80 780 000 & $ \qquad $ 15 & Austria    & 8 507 786 \\
			 2 & France         & 65 856 609 &            16 & Bulgaria   & 7 245 677 \\
			 3 & United Kingdom & 64 308 261 &            17 & Denmark    & 5 627 235 \\
			 4 & Italy          & 60 782 668 &            18 & Finland    & 5 451 270 \\
			 5 & Spain          & 46 507 760 &            19 & Slovakia   & 5 415 949 \\
			 6 & Poland         & 38 495 659 &            20 & Ireland    & 4 604 029 \\
			 7 & Romania        & 19 942 642 &            21 & Croatia    & 4 246 700 \\
			 8 & Netherlands    & 16 829 289 &            22 & Lithuania  & 2 943 472 \\
			 9 & Belgium        & 11 203 992 &            23 & Slovenia   & 2 061 085 \\
			10 & Greece         & 10 992 589 &            24 & Latvia     & 2 001 468 \\
			11 & Czech Republic & 10 512 419 &            25 & Estonia    & 1 315 819 \\
			12 & Portugal       & 10 427 301 &            26 & Cyprus     &   858 000 \\
			13 & Hungary        &  9 879 000 &            27 & Luxembourg &   549 680 \\
			14 & Sweden         &  9 644 864 &            28 & Malta      &   425 384 \\
			 \hline
		\end{tabular}
	\end{center}
	\caption{Population data of the European Union on 01.01.2014, see also \cite[Table~1]{Kurz_2015}.}
	\label{tabpopulation}
\end{table}

\begin{lemma}\label{lemma:non-separable}
	Let $ \winning $ be a simple game and let $ \winning^* $ and $ \nonsep $ be sets of winning and losing coalitions for $ \winning $, respectively, such that $ |\winning^*| \ge |\nonsep| $.
	If
	\[
		| \{ W \in \winning^* : m \in W \} | = | \{ L \in \nonsep : m \in L \} |
	\]
	holds for all $ m \in M $, then $ \nonsep $ is non-separable with respect to $ \winning $.
\end{lemma}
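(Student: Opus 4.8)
The plan is to argue by contradiction via a double‑counting inequality. Suppose $\nonsep$ were \emph{not} non-separable, i.e., there is a weighted game $\winning' \supseteq \winning$ with $\winning' \cap \nonsep = \emptyset$, and write $\winning' = \{ C \subseteq M : \sum_{m \in C} a_m \ge \beta \}$ for suitable $a \in \R_{\ge 0}^M$ and $\beta \in \R$. Every coalition $W \in \winning^*$ is winning for $\winning$ and hence for $\winning' \supseteq \winning$, so $\sum_{m \in W} a_m \ge \beta$. On the other hand, since $\winning' \cap \nonsep = \emptyset$, every coalition $L \in \nonsep$ is losing for $\winning'$, so $\sum_{m \in L} a_m < \beta$.

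Next I would sum these inequalities: over all $W \in \winning^*$ in the first case and over all $L \in \nonsep$ in the second. Exchanging the order of summation (accounting, for each member $m$, the total weight contributed across the coalitions that contain it) and invoking the hypothesis that the two ``column sums'' $|\{W \in \winning^* : m \in W\}|$ and $|\{L \in \nonsep : m \in L\}|$ coincide for every $m \in M$, both double sums turn out to equal $\sum_{m \in M} a_m \cdot |\{L \in \nonsep : m \in L\}|$. Hence
\[
	|\winning^*| \cdot \beta \;\le\; \sum_{W \in \winning^*} \sum_{m \in W} a_m \;=\; \sum_{L \in \nonsep} \sum_{m \in L} a_m \;<\; |\nonsep| \cdot \beta,
\]
where the final inequality is strict because $\nonsep$ contains at least one coalition.

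Finally I would pin down the sign of $\beta$ to close the argument. Since $\nonsep$ is nonempty, there is a coalition losing for $\winning'$, so $\winning' \ne 2^M$, and by monotonicity $\emptyset \notin \winning'$; evaluating the defining inequality of $\winning'$ at $\emptyset$ gives $0 < \beta$. Dividing $|\winning^*| \cdot \beta < |\nonsep| \cdot \beta$ by $\beta > 0$ yields $|\winning^*| < |\nonsep|$, contradicting the assumption $|\winning^*| \ge |\nonsep|$. The proof is short; the only non-mechanical steps are ensuring that the $\nonsep$-side inequality is genuinely strict and correctly deducing $\beta > 0$ from the empty coalition — which I expect to be the crux rather than a genuine obstacle.
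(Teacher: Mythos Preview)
Your proof is correct and uses essentially the same double-counting argument as the paper: the paper proceeds directly, deducing from $\sum_{L \in \nonsep} \sum_{m \in L} a_m = \sum_{W \in \winning^*} \sum_{m \in W} a_m \ge |\winning^*|\,\beta$ that some $L \in \nonsep$ must satisfy $\sum_{m \in L} a_m \ge \beta$, whereas you phrase the same inequality chain as a contradiction. Your explicit verification that $\beta > 0$ is a detail the paper leaves implicit.
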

\begin{proof}
	Consider any weighted game $ \winning' = \{ C \subseteq M : \sum_{m\in C} a_m \geq \beta \} \supseteq \winning $ with $ a \in \R_{\ge 0}^M $ and $ \beta \in \R $.
	Then we have
	\[
		\sum_{L \in \nonsep} \sum_{m \in L} a_m = \sum_{W \in \winning^*} \sum_{m\in W} a_m \ge \beta \left \vert \winning^* \right \vert.
	\]
	The last inequality holds because all elements of $ \winning^* $ are contained in $ \winning' $.
	Thus, there must exist some $ L \in \nonsep $, such that 
	\[
		\sum_{m \in L} a_m \ge \beta \cdot \frac{|\winning^*|}{|\nonsep|} \ge \beta.
	\]
	Therefore, we have $ L \in \winning' $ and hence $ \winning' \cap \nonsep \ne \emptyset $.
	Since this holds for any weighted game $ \winning' \supseteq \winning $, $ \nonsep $ is non-separable with respect to $ \winning $.
\end{proof}

We claim that the following $ 2 $-element subsets of the above losing coalitions are non-separable.
\begingroup
\allowdisplaybreaks
\begin{align}
	\nonumber & \{ L_{1}, L_{5}  \}, \{ L_{1}, L_{8}  \}, \{ L_{1}, L_{9}  \}, \{ L_{1},L_{10} \}, \{ L_{1},L_{11} \}, \{ L_{1},L_{13} \}, \{ L_{1},L_{14} \}, \{ L_{1},L_{15} \}, \\
	\nonumber & \{ L_{2}, L_{3}  \}, \{ L_{2}, L_{4}  \}, \{ L_{2}, L_{5}  \}, \{ L_{2},L_{6}  \}, \{ L_{2},L_{7}  \}, \{ L_{2},L_{8}  \}, \{ L_{2},L_{10} \}, \{ L_{2},L_{11} \}, \{ L_{2},L_{15} \}, \\
	\nonumber & \{ L_{3}, L_{4}  \}, \{ L_{3}, L_{5}  \}, \{ L_{3}, L_{7}  \}, \{ L_{3},L_{9}  \}, \{ L_{3},L_{10} \}, \{ L_{3},L_{12} \}, \{ L_{3},L_{13} \}, \{ L_{3},L_{14} \}, \{ L_{3},L_{15} \}, \\
	\nonumber & \{ L_{4}, L_{6}  \}, \{ L_{4}, L_{8}  \}, \{ L_{4}, L_{9}  \}, \{ L_{4},L_{11} \}, \{ L_{4},L_{12} \}, \{ L_{4},L_{13} \}, \{ L_{4},L_{14} \}, \{ L_{4},L_{15} \}, \\
	\nonumber & \{ L_{5}, L_{7}  \}, \{ L_{5}, L_{8}  \}, \{ L_{5}, L_{11} \}, \{ L_{5},L_{14} \}, \{ L_{5},L_{15} \}, \\
	\nonumber & \{ L_{6}, L_{7}  \}, \{ L_{6}, L_{8}  \}, \{ L_{6}, L_{9}  \}, \{ L_{6},L_{11} \}, \{ L_{6},L_{13} \}, \{ L_{6},L_{14} \}, \{ L_{6},L_{15} \}, \\
	\nonumber & \{ L_{7}, L_{9}  \}, \{ L_{7}, L_{10} \}, \{ L_{7}, L_{11} \}, \{ L_{7},L_{13} \}, \{ L_{7},L_{14} \}, \{ L_{7},L_{15} \}, \\
	\nonumber & \{ L_{8}, L_{9}  \}, \{ L_{8}, L_{10} \}, \{ L_{8}, L_{11} \}, \{ L_{8},L_{12} \}, \{ L_{8},L_{14} \}, \{ L_{8},L_{15} \}, \\
	\nonumber & \{ L_{9}, L_{10} \}, \{ L_{9}, L_{11} \}, \{ L_{9}, L_{12} \}, \{ L_{9},L_{13} \}, \{ L_{9},L_{14} \}, \{ L_{9},L_{15} \}, \\
	\nonumber & \{ L_{10},L_{11} \}, \{ L_{10},L_{14} \}, \{ L_{10},L_{15} \}, \\
	\nonumber & \{ L_{11},L_{12} \}, \{ L_{11},L_{13} \}, \{ L_{11},L_{15} \}, \\
	\nonumber & \{ L_{12},L_{13} \}, \{ L_{12},L_{15} \}, \\
	\nonumber & \{ L_{13},L_{14} \}, \{ L_{13},L_{15} \}, \\
	\label{eq2edges} & \{ L_{14},L_{15} \} 
\end{align}
\endgroup

To see that each above set $ \nonsep \coloneqq \{L_i, L_j\} $ is non-separable, we make use of \cref{lemma:non-separable} as follows.
If $ L_i,L_j \ne L_{15} $, we have that $ L_i $ and $ L_j $ are contained in $ \winning_{\ref{enum:55}} \setminus \winning_{\ref{enum:65}} $.
Pick a set of states $ A \subseteq L_i \cup L_j \setminus (L_i \cap L_j) $ of minimum total population such that $ W_1 := A \cup (L_i \cap L_j) $ is contained in $ \winning_{\ref{enum:25}} \subseteq \winning $.
For all above pairs it can be checked that $ W_2 := (L_i \cup L_j) \setminus A $ is contained in $ \winning_{\ref{enum:55}} \cap \winning_{\ref{enum:65}} \subseteq \winning $.
By construction, $ \nonsep $ and $ \winning^* \coloneqq \{W_1, W_2\} $ satisfy the assumptions of \cref{lemma:non-separable} and hence $ \nonsep $ is indeed non-separable.

Otherwise, we may assume that $ L_j = L_{15} $.
For all above pairs, exchanging the two members with the least population in $ L_i \setminus L_{15} $ with the member of largest population in $ L_{15} \setminus L_i $, results in two winning sets $ W_1, W_2 $.
Again, $ \nonsep $ and $ \winning^* \coloneqq \{W_1, W_2\} $ satisfy the assumptions of \cref{lemma:non-separable}, implying that $ \nonsep $ is non-separable.

Moreover, the following $ 3 $-element subsets of losing coalitions are also non-separable.
\begin{equation}
	\label{eq3edges}
	\{ L_{1},L_{2}, L_{12} \},
	\{ L_{1},L_{4}, L_{7}  \},
	\{ L_{1},L_{6}, L_{12} \},
	\{ L_{4},L_{5}, L_{10} \},
	\{ L_{5},L_{10},L_{12} \}
\end{equation}
To see that these sets are non-separable, consider the following sets of winning coalitions.
\begingroup
\allowdisplaybreaks
\begin{align*}
	W_{1}  & \coloneqq \{ 1, 2, 4, 5, 6, 13, 14, 16, 17, 18, 19, 20, 21, 22, 23, 24, 25, 26, 27, 28 \} \\
	W_{2}  & \coloneqq \{ 4, 5, 6, 7, 8, 9, 10, 11, 12, 13, 14, 15, 16, 17, 18, 19, 20, 21, 22, 23, 24, 25, 26, 27, 28 \} \\
	W_{3}  & \coloneqq \{ 3, 4, 5, 6, 8, 9, 10, 11, 12, 13, 15, 16, 17, 18, 19, 20, 21, 22, 23, 24, 25, 26, 27, 28 \} \\
	W_{4}  & \coloneqq \{ 1, 2, 5, 6, 10, 11, 12, 13, 14, 15, 16, 17, 18, 19, 20, 21, 22, 23, 24, 25, 26, 27, 28 \} \\
	W_{5}  & \coloneqq \{ 2, 3, 4, 5, 6, 7, 13, 16, 17, 18, 19, 20, 21, 22, 23, 24, 25, 26, 27, 28 \} \\
	W_{6}  & \coloneqq \{ 2, 4, 5, 6, 7, 10, 11, 12, 13, 14, 15, 16, 17, 18, 19, 20, 21, 22, 23, 24, 25, 26, 27, 28 \} \\
	W_{7}  & \coloneqq \{ 1, 3, 4, 5, 9, 10, 11, 12, 14, 15, 16, 17, 18, 19, 20, 21, 22, 23, 24, 25, 26, 27, 28 \} \\
	W_{8}  & \coloneqq \{ 2, 3, 4, 5, 6, 10, 11, 15, 16, 17, 18, 19, 20, 21, 22, 23, 24, 25, 26, 27, 28 \} \\
	W_{9}  & \coloneqq \{ 2, 4, 5, 6, 7, 8, 9, 12, 13, 14, 15, 16, 17, 18, 19, 20, 21, 22, 23, 24, 25, 26, 27, 28 \} \\
	W_{10} & \coloneqq \{ 3, 5, 6, 7, 8, 9, 10, 11, 12, 13, 14, 15, 16, 17, 18, 19, 20, 21, 22, 23, 24, 25, 26, 27, 28 \} \\
	W_{11} & \coloneqq \{ 1, 2, 5, 6, 8, 10, 11, 12, 15, 16, 17, 18, 19, 20, 21, 22, 23, 24, 25, 26, 27, 28 \} \\
	W_{12} & \coloneqq \{ 2, 3, 4, 5, 6, 8, 9, 12, 15, 16, 19, 20, 21, 22, 23, 24, 25, 26, 27, 28 \}
\end{align*}
\endgroup
Observing that the pairs
\begin{align*}
	& (\{ W_{2}, W_{7} , W_{11} \}, \{ L_{1}, L_{2}, L_{12} \}), \\
	& (\{ W_{3}, W_{10}, W_{12} \}, \{ L_{1}, L_{4}, L_{7} \}), \\
	& (\{ W_{4}, W_{8} , W_{10} \}, \{ L_{1}, L_{6}, L_{12} \}), \\
	& (\{ W_{2}, W_{5} , W_{9}  \}, \{ L_{4}, L_{5}, L_{10} \}), \text{ and} \\
	& (\{ W_{1}, W_{2} , W_{6}  \}, \{ L_{5}, L_{10}, L_{12} \})
\end{align*}
satisfy the assumptions of \cref{lemma:non-separable}, we see that the sets in~\eqref{eq3edges} are indeed non-separable.

In the next section, we show that the non-separable sets in~\eqref{eq2edges} and~\eqref{eq3edges} do not admit a $ 7 $-cover.
Recall that this implies that the dimension must be at least $ 8 $ by \cref{lemma:dim}.

\section{Proof that no 7-cover can exist}
\label{secLowerBound}
For the sake of contradiction, let us assume that the non-separable sets in~\eqref{eq2edges} and~\eqref{eq3edges} admit a $ 7 $-cover.
This implies that there exist sets $ \losing_1,\dots,\losing_7 \subseteq \{L_1,\dots,L_{15}\} $ such that
\begin{itemize}
\item[(i)] each $ \losing_j $ is an inclusion-wise maximal subset of $ \{L_1,\dots,L_{15}\} $ that does not contain any of the sets in~\eqref{eq2edges} and~\eqref{eq3edges}, and
\item[(ii)] $ \losing_1 \cup \dots \cup \losing_7 = \{L_1,\dots,L_{15}\} $.
\end{itemize}
It can be easily verified that the only sets satisfying~(i) are the following.
\begin{align}
	\nonumber &\{ L_{1},L_{2} \}, \{ L_{1},L_{3} ,L_{6} \},\{ L_{1},L_{4}        \}, \{ L_{1},L_{7},L_{12} \}, \{ L_{2},L_{9}  \}, \{ L_{2}, L_{12},L_{14} \},\{ L_{2},L_{13}        \},\\
	\nonumber &\{ L_{3},L_{8} \}, \{ L_{3},L_{11}       \},\{ L_{4},L_{5}        \}, \{ L_{4},L_{7}        \}, \{ L_{4},L_{10} \}, \{ L_{5}, L_{6} ,L_{10} \},\{ L_{5},L_{6} ,L_{12} \},\\
	\label{choices} &\{ L_{5},L_{9} \}, \{ L_{5},L_{10},L_{13}\},\{ L_{6},L_{10},L_{12}\}, \{ L_{7},L_{8}        \}, \{ L_{8},L_{13} \}, \{ L_{11},L_{14}        \},\{ L_{15}              \}
\end{align}
In what follows, for a weight-vector $ w = (w_1,\dots,w_{15}) \in \R^{15} $, let us define the \emph{weight} of a set $ \losing' \subseteq \{1,\dots,15\} $ as $ w(\losing') := \sum_{i \in \losing'} w_i $.

Suppose first that none of the sets $ \losing_1,\dots,\losing_7 $ is equal to $ \{ L_{1},L_{3} ,L_{6} \} $.
In this case, consider the weight-vector
\[
	w = (\nicefrac{1}{2},0,1,\nicefrac{1}{2},0,1,\nicefrac{1}{2},0,1,0,0,0,1,1,1)
\]
and observe that the weight of each set in~\eqref{choices} that is distinct from $ \{ L_{1},L_{3} ,L_{6} \} $ is at most $ 1 $.
Thus, the weight of each set $ \losing_1,\dots,\losing_7 $ is at most $ 1 $, and we obtain
\[
	7 < \tfrac{15}{2} = w(\{1,\dots,15\}) = w(\losing_1 \cup \dots \cup \losing_7) \le w(\losing_1) + \dots + w(\losing_7) \le 7,
\]
a contradiction.

It remains to consider the case that one of the sets $ \losing_1,\dots,\losing_7 $ is equal to $ \{ L_{1},L_{3} ,L_{6} \} $, say $ \losing_1 $.
Consider the weight-vector
\[
	w = (0,\nicefrac{1}{3},0,\nicefrac{2}{3},\nicefrac{1}{3},0,\nicefrac{1}{3},\nicefrac{2}{3},\nicefrac{2}{3},\nicefrac{1}{3},1,\nicefrac{2}{3},\nicefrac{1}{3},0,1)
\]
and observe that the weight of each set in~\eqref{choices} is at most $ 1 $, and that $ w(\losing_1) = 0 $.
Thus, we have
\[
	6 < \tfrac{19}{3} = w(\{1,\dots,15\}) = w(\losing_1 \cup \dots \cup \losing_7) \le w(\losing_2) + \dots + w(\losing_7) \le 6,
\]
another contradiction.
This completes our proof.

\section*{Acknowledgements}
The second author would like to thank Gerhard Woeginger for bringing this topic to his attention.

\bibliographystyle{plainnat}
\bibliography{references.bib}

\end{document}